\newtheorem{lemma}{Lemma}[section]
\newtheorem{prop}[lemma]{Proposition}
\newtheorem{thm}[lemma]{Theorem}
\newtheorem{cor}[lemma]{Corollary}
\theoremstyle{definition}
\newtheorem{defn}[lemma]{Definition}
\theoremstyle{remark}
\newtheorem{rmk}[lemma]{Remark}
\newcommand{\locdim}{\dim_{\mathrm{loc}}}
\newcommand{\llocdim}{{\underline{\dim}}_{\mathrm{loc}}}
\newcommand{\ulocdim}{{\overline{\dim}}_{\mathrm{loc}}}
\renewcommand{\rho}{\varrho}
\renewcommand{\epsilon}{\varepsilon}
\numberwithin{equation}{section}
\numberwithin{table}{section}
\begin{document}
\title[Local dimensions]{Local dimensions of overlapping self-similar measures}
\author{Kathryn E. Hare and Kevin G. Hare}
\address{Department of Pure Mathematics \\
University of Waterloo \\
Waterloo, Ontario \\
Canada N2L 3G1}
\thanks{Research of K. G. Hare was supported by NSERC Grant 2014-03154.}
\email{kehare@uwaterloo.ca}
\thanks{Research of K. E. Hare was supported by NSERC Grant 2016-03719.}
\email{kghare@uwaterloo.ca}
\date{\today }
\subjclass[2010]{Primary 28C15, Secondary 28A80, 37C45}
\keywords{Local dimension, Bernoulli convolution, Cantor measure.}

\begin{abstract}
We show that any equicontractive, self-similar measure arising from the IFS
of contractions $(S_{j})$, with self-similar set $[0,1]$, admits an isolated
point in its set of local dimensions provided the images of $S_{j}(0,1)$
(suitably) overlap and the minimal probability is associated with one
(resp., both) of the endpoint contractions. Examples include $m$-fold
convolution products of Bernoulli convolutions or Cantor measures with
contraction factor exceeding $1/(m+1)$ in the biased case and $1/m$ in the
unbiased case. We also obtain upper and lower bounds on the set of local
dimensions for various Bernoulli convolutions.
\end{abstract}

\maketitle

\section{Introduction}

\label{sec:intro}

Consider the iterated function system (IFS) consisting of contractions of 
$S_{j}:[0,1]\rightarrow \lbrack 0,1],$ with common contraction factor $\rho ,$
and probabilities $p_{j},$ $j=0,...,m\geq 1$. By the equicontractive,
self-similar measure associated with this IFS we mean the unique Borel
probability measure $\mu $ satisfying 
\begin{equation}
\mu =\sum_{j=0}^{m}p_{j}\cdot \mu \circ S_{j}^{-1}.  \label{ssMeas}
\end{equation}
This measure is supported on the associated self-similar set and is well
known to be either purely absolutely continuous with respect to Lebesgue
measure or purely singular. When $S_{0}(x)=\rho x$, $S_{1}(x)=\rho x+1-\rho $
and $m=1$, the associated self-similar measures are known as Cantor measures
or Bernoulli convolutions, and are sometimes referred to as unbiased if 
$p_{0}=p_{1},$ or biased if $p_{0}\neq p_{1}$.

Our interest is in the local behaviour of these measures. The \emph{local
dimension} of a measure $\mu $ at a point $x$ in the support of $\mu $ is
defined as 
\begin{equation}
\locdim \mu (x)=\lim_{\epsilon \rightarrow 0}\frac{\log (\mu
([x-\epsilon ,x+\epsilon ]))}{\log \epsilon }.  \label{LocDimDefn}
\end{equation}
In the case that the IFS satisfies the open set condition, it is well known
that the set of local dimensions of the associated self-similar measure is a
closed interval and there are formulas for the endpoints of the interval
which depend on the contraction factors and the probabilities. See \cite{Fa}
for more details. For measures that do not satisfy the open set condition
the situation is much less well understood. In \cite{HuLau}, Hu and Lau
discovered that the $3$-fold convolution of the unbiased middle-third Cantor
measure admits an isolated point in its set of local dimensions. This was
later found to be true for certain other equicontractive self-similar
Cantor-like measures arising from IFS which have enough `overlap', such as
the $m$-fold convolution product of the unbiased Cantor measure with
contraction factor $1/m$, \cite{BHM, Sh}. These measures all had the
so-called `finite type' property, a separation condition permitting
overlaps, but stronger than the weak separation condition.

The Bernoulli convolutions with contraction factor the inverse of a Pisot
number\footnote{A Pisot number is a real algebraic integer, greater than $1$, such that all
of its Galois conjugates are strictly less than 1 in absolute value.} also
have the finite type property. These are particularly interesting being the
only known singular Bernoulli convolutions, see \cite{Erdos39, Shmerkin14,
Solomyak95}. There is a long history of studying the dimensionality
properties of these measures, c.f., \cite{PSS, So, Va} and the many
references cited therein for historical information. In \cite{F1, F2}, Feng
conducted a study of mainly unbiased Bernoulli convolutions with contraction
factor the inverse of a simple Pisot number and proved that for this class
of measures the set of local dimensions is an interval. In contrast, in 
\cite{HHN} it was shown that all biased Bernoulli convolutions with these
contraction factors admit an isolated point.

In this paper, we show that {\em any} equicontractive, self-similar
measure will admit an isolated point in its set of local dimensions provided
the images of $[0,1]$ under the contractions strictly overlap and $p_{0}$ is
the unique minimal probability (Theorem \ref{thm:general biased}). We also
prove there is an isolated point if $p_{0}=p_{m}$ are the unique minimal
probabilities and there is `sufficient' overlap (Theorem \ref{thm:general
unbiased}). In particular, we prove that if $\mu $ is any Bernoulli
convolution or Cantor measure with contraction factor $\rho >1/(m+1)$ in the
biased case and $\rho >(\sqrt{m^{2}+4}-m)/2$ in the unbiased case, then the 
$m$-fold convolution of $\mu $ with itself has an isolated point in its set
of local dimensions, improving upon the examples given in \cite{BHM, Sh}.

In all these cases, the isolated point is the local dimension at $0$. This
local dimension at $x = 0$ is easy to compute and is the maximum local dimension. A
challenging problem is to find sharp bounds for the set of local dimensions
at other $x$. Upper bounds have recently been found in \cite{Baker17} for
special classes of examples of these measures, including the biased
Bernoulli convolutions. Our arguments of Section \ref{sec:upper} also give
upper bounds. In Section \ref{sec:comp} we discuss other computational
techniques that allow us to prove even better upper bounds for the local
dimensions. A variation of these techniques are used in Section 
\ref{sec:lower} to find lower bounds for local dimension in the case where the
self-similar measure satisfies the asymptotically weak separation condition.
These techniques are applied to various Bernoulli convolutions.

\section{Terminology and Basic Properties}

\label{sec:term}

Throughout the paper we study the IFS $(S_{j},p_{j})$ consisting of the
contractions 
\begin{equation}
S_{j}(x)=\rho x+d_{j}\text{ for }0=d_{0}<d_{1}<\dots <d_{m}=1-\rho ,\ j=0,...,m,\text{ }m\geq 1  \label{IFS}
\end{equation}
and probabilities $p_{j}>0,$ $\sum_{j=0}^{m}p_{j}=1,$ and the associated
self-similar measure $\mu $ satisfying (\ref{ssMeas}). We further assume
that $d_i - d_{i-1} \leq \rho$ from which it follows that the associated
self-similar set (and hence support of $\mu$) is $[0,1]$.  We refer to $\rho $
as the contraction factor of the IFS or the self-similar measure. When $m=1$
the associated self-similar measure is a Cantor measure (when $\rho <1/2)$
or Bernoulli convolution (when $\rho >1/2)$. If, in addition, 
$p_{0}=p_{1}=1/2$ (the unbiased case) we often denote the Cantor measure or
Bernoulli convolution by $\mu _{\rho }$.

The notion of local dimension of a measure was stated in (\ref{LocDimDefn}).
Of course, the limit need not exist and when we replace the limit by $\lim
\sup $ or $\lim \inf $, then this gives the {\em upper and lower local
dimensions of $\mu $ at $x$} denoted by $\ulocdim \mu (x)$ and 
$\llocdim \mu(x)$ respectively:
\begin{eqnarray*}
\ulocdim \mu(x) &=&\limsup_{\epsilon \rightarrow 0}
\frac{\log (\mu ([x-\epsilon ,x+\epsilon ]))}{\log \epsilon }, \\
\llocdim \mu(x) &=&\liminf_{\epsilon \rightarrow 0}
\frac{\log (\mu ([x-\epsilon ,x+\epsilon ]))}{\log \epsilon }.
\end{eqnarray*}

Given $\sigma =(\sigma _{1},\sigma _{2},\dots ,\sigma _{n})\in \mathcal{A}^{n}$ where $\mathcal{A}=\{0,1,\dots ,m\}$, we denote by $S_{\sigma }$ the
concatenation $S_{\sigma _{1}}\circ \dots \circ S_{\sigma _{n}}$ and put 
$p_{\sigma }=p_{\sigma _{1}}p_{\sigma _{2}}\dots p_{\sigma _{n}}$. For 
$\sigma = (\sigma_1, \sigma_2, \sigma_3 \dots) \in \mathcal{A}^{\mathbb{N}}$ we define $S_{\sigma
}(0)=\lim_{n\rightarrow \infty }S_{\sigma_1 \sigma_2 \dots \sigma_n}(0)$.
We let 
\begin{equation*}
\mathcal{E}(x)=\{\sigma \in \mathcal{A}^{\mathbb{N}}:S_{\sigma }(0)=x\}.
\end{equation*}
If $\sigma =(\sigma _{i})\in \mathcal{E}(x)$ is a presentation of $x$, then 
$x=\sum \sigma _{i}\rho ^{i-1}(1-\rho )$, thus the set $\mathcal{E}(x)$ can
be thought of as the set of beta-expansions of $x$ with digit set
$\mathcal{A}$, as first introduced in \cite{Pa, Re}.

We also set 
\begin{align}
\mathcal{E}_{n}(x)& =\{\sigma \in \mathcal{A}^{n}:\mathrm{there\ exists}\
\tau \in \mathcal{A}^{\mathbb{N}}\ \mathrm{such\ that}\ S_{\sigma \tau
}(0)=x\}  \notag \\
& =\{\sigma \in \mathcal{A}^{n}:x\in S_{\sigma }([0,1])\}  \label{E}
\end{align}
and 
\begin{equation}
\mathcal{N}_{n}(x)=\sum_{\sigma \in \mathcal{E}_{n}(x)}p_{\sigma }\text{ .}
\label{N}
\end{equation}
Much is known about $\mathcal{E}_{n}(x)$ and $\mathcal{N}_{n}(x),$
especially when $\mu _{\rho }$ is a unbiased Bernoulli convolution, see 
\cite{FengSidorov09, Kempton18} for recent results. In particular, note that 
if $\sigma \in \mathcal{E}_{n}(x),$ then $S_{\sigma }([0,1])\subset \lbrack
x-\rho ^{n},x+\rho ^{n}]$. Thus $\mu ([x-\rho ^{n},x+\rho ^{n}])\geq 
\mathcal{N}_{n}(x)$ from which the following statement is immediate.

\begin{lemma}
\label{lem:3.1}Assume $\mu $ is an equicontractive self-similar measure with
contraction factor $\rho $. For all $x\in $supp$\mu $ we have 
\begin{equation*}
\ulocdim \mu(x) \leq \limsup_{n\rightarrow \infty }
\frac{\log (\mathcal{N}_{n}(x))}{n\log \rho }.
\end{equation*}
\end{lemma}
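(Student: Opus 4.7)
The plan is to leverage the inequality $\mu([x-\rho^n,x+\rho^n])\geq \mathcal{N}_n(x)$ already noted immediately before the lemma, and then interpolate between the geometric scales $\rho^n$ and a general $\epsilon\to 0$. The only subtlety is that $\log \rho < 0$ so several inequalities reverse when divided through.

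First, given $\epsilon>0$ small, I would choose $n=n(\epsilon)\in\mathbb{N}$ with $\rho^{n+1}\leq \epsilon<\rho^{n}$. The containment $[x-\rho^{n+1},x+\rho^{n+1}]\subset [x-\epsilon,x+\epsilon]$ together with the observation preceding the lemma gives
\begin{equation*}
\mu([x-\epsilon,x+\epsilon])\geq \mu([x-\rho^{n+1},x+\rho^{n+1}])\geq \mathcal{N}_{n+1}(x).
\end{equation*}
Since $\mathcal{E}_{n+1}(x)\subset \mathcal{A}^{n+1}$, we have $\mathcal{N}_{n+1}(x)\leq 1$, hence $\log \mathcal{N}_{n+1}(x)\leq 0$.

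Next, taking logarithms (both sides negative) and dividing by $\log \epsilon<0$ reverses the inequality:
\begin{equation*}
\frac{\log \mu([x-\epsilon,x+\epsilon])}{\log \epsilon}\leq \frac{\log \mathcal{N}_{n+1}(x)}{\log \epsilon}.
\end{equation*}
From $\epsilon \geq \rho^{n+1}$ we get $\log \epsilon \leq (n+1)\log \rho<0$, and since $\log \mathcal{N}_{n+1}(x)\leq 0$, dividing by the more negative denominator yields the additional bound
\begin{equation*}
\frac{\log \mathcal{N}_{n+1}(x)}{\log \epsilon}\leq \frac{\log \mathcal{N}_{n+1}(x)}{(n+1)\log \rho}.
\end{equation*}

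Finally, as $\epsilon \to 0$ we have $n(\epsilon)\to \infty$, so passing to $\limsup$ gives
\begin{equation*}
\ulocdim \mu(x) \leq \limsup_{n\rightarrow \infty}\frac{\log \mathcal{N}_{n+1}(x)}{(n+1)\log \rho}=\limsup_{n\rightarrow \infty}\frac{\log \mathcal{N}_{n}(x)}{n\log \rho},
\end{equation*}
as required. There is no real obstacle here; the argument is essentially bookkeeping, and the main thing to watch is the sign of $\log \rho$ when manipulating the inequalities.
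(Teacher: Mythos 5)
Your overall strategy is exactly the one the paper has in mind (the paper simply calls the lemma ``immediate'' from the observation that $\mu([x-\rho^{n},x+\rho^{n}])\geq \mathcal{N}_{n}(x)$): bracket $\epsilon$ between consecutive powers of $\rho$ and interpolate. However, the middle step contains a direction error that, as written, breaks the argument. From $\epsilon \geq \rho^{n+1}$ you conclude $\log\epsilon \leq (n+1)\log\rho$; since $\log$ is increasing, the correct consequence is $\log\epsilon \geq (n+1)\log\rho$, i.e.\ $(n+1)\log\rho$ is the \emph{more} negative quantity. With the correct premise, dividing the nonpositive number $\log\mathcal{N}_{n+1}(x)$ by these two denominators gives
\begin{equation*}
\frac{\log \mathcal{N}_{n+1}(x)}{\log \epsilon}\;\geq\;\frac{\log \mathcal{N}_{n+1}(x)}{(n+1)\log \rho},
\end{equation*}
the reverse of what you claim, so the displayed bound is unjustified.

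The fix is to use the other endpoint of your bracketing. From $\epsilon<\rho^{n}$ you get $\log\epsilon<n\log\rho<0$, so $\log\epsilon$ really is the more negative denominator and
\begin{equation*}
\frac{\log \mathcal{N}_{n+1}(x)}{\log \epsilon}\;\leq\;\frac{\log \mathcal{N}_{n+1}(x)}{n\log \rho}
=\frac{n+1}{n}\cdot\frac{\log \mathcal{N}_{n+1}(x)}{(n+1)\log \rho}.
\end{equation*}
Since $x\in\supp\mu$ guarantees $\mathcal{E}_{n+1}(x)\neq\emptyset$, we have $\mathcal{N}_{n+1}(x)\geq(\min_{j}p_{j})^{n+1}$, so the quantities $\log\mathcal{N}_{n+1}(x)/((n+1)\log\rho)$ are bounded and the factor $(n+1)/n\to1$ does not affect the $\limsup$. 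Passing to the $\limsup$ as $\epsilon\to0$ (hence $n(\epsilon)\to\infty$) then yields the stated bound. With this correction your proof is complete and coincides with the argument the paper leaves implicit.
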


Together with an older result of Erd\"{o}s, we can quickly deduce that
unbiased Bernoulli convolutions with large enough contraction factors have
isolated points in their set of local dimensions. This is essentially proved
in \cite{EJK}, but we include a sketch here for completeness.

\begin{prop}
\label{Erdos}Let $\mu _{\rho }$ be the unbiased Bernoulli convolution with
contraction factor $\rho >(\sqrt{5}-1)/2$. Then
\begin{equation*}
\sup_{x\in (0,1)}
\ulocdim \mu_\rho (x) <\locdim \mu _{\rho }(0)=\locdim \mu _{\rho }(1).
\end{equation*}
\end{prop}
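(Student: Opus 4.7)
The plan, essentially the argument in \cite{EJK}, has three steps: compute $\locdim \mu_\rho(0)$, use self-similarity to reduce the supremum over $(0,1)$ to one over a compact subset, and then invoke an Erd\H{o}s-style lower bound on the number of representations of points in that compact set.

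First, since $\rho > (\sqrt{5}-1)/2 > 1/2$, the interval $[0,\rho^n]$ satisfies $S_1^{-1}([0,\rho^n]) \cap [0,1] = \emptyset$ for every $n \geq 1$, so the self-similar identity $\mu_\rho = \tfrac{1}{2}\mu_\rho \circ S_0^{-1} + \tfrac{1}{2}\mu_\rho \circ S_1^{-1}$ gives $\mu_\rho([0,\rho^n]) = \tfrac{1}{2}\mu_\rho([0,\rho^{n-1}])$, and induction yields $\mu_\rho([0,\rho^n]) = 2^{-n}$. Hence $\locdim \mu_\rho(0) = \log(1/2)/\log \rho$; the same value occurs at $x=1$ by the reflection symmetry $x \mapsto 1-x$ of the unbiased measure.

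Next, observe that for $x \in (0,1-\rho)$ and sufficiently small $\epsilon > 0$, $S_1^{-1}([x-\epsilon,x+\epsilon]) \cap [0,1] = \emptyset$, so
\[
\mu_\rho([x-\epsilon,x+\epsilon]) \;=\; \tfrac{1}{2}\,\mu_\rho\!\left(\left[x/\rho - \epsilon/\rho,\; x/\rho + \epsilon/\rho\right]\right).
\]
Taking logarithms, dividing by $\log \epsilon$, and letting $\epsilon \to 0$, the $\log(1/2)$ term and the change-of-scale correction both become negligible, yielding $\ulocdim \mu_\rho(x) = \ulocdim \mu_\rho(x/\rho)$. Since the hypothesis $\rho > (\sqrt{5}-1)/2$ is equivalent to $\rho^2 > 1-\rho$, one has $(1-\rho)/\rho < \rho$, so iterating $x \mapsto x/\rho$ eventually pushes any $x \in (0,1-\rho)$ into the compact overlap region $K := [1-\rho,\rho] \subset (0,1)$; the $x \mapsto 1-x$ mirror argument handles $x \in (\rho,1)$. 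Therefore $\sup_{x \in (0,1)} \ulocdim \mu_\rho(x) = \sup_{y \in K} \ulocdim \mu_\rho(y)$.

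Finally, the key combinatorial input is an Erd\H{o}s-style uniform estimate: for $\rho > (\sqrt{5}-1)/2$ there exists $C_\rho > 1$ with $|\E_n(y)| \geq C_\rho^n$ for all large $n$, uniformly in $y \in K$. Granting this, $\N_n(y) \geq (C_\rho/2)^n$ since $p_\sigma = 2^{-n}$ for every $\sigma \in \A^n$ in the unbiased case, and Lemma \ref{lem:3.1} gives $\ulocdim \mu_\rho(y) \leq \log(C_\rho/2)/\log \rho < \log(1/2)/\log \rho = \locdim \mu_\rho(0)$, which together with the reduction completes the proof. The main obstacle is the uniform exponential lower bound on $|\E_n(y)|$: this is precisely where the threshold enters, because only above $(\sqrt{5}-1)/2$ does the overlap $[1-\rho,\rho]$ force every $y \in K$ to admit at least two distinct prefixes of some bounded length that both lead back into $K$, producing the required doubling-type growth under iteration.
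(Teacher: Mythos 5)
Your proof has the same skeleton as the paper's: bound $\#\mathcal{E}_{n}(x)$ from below by an exponential, convert this to $\mathcal{N}_{n}(x)\geq c\,2^{-n(1-\delta )}$, and feed it into Lemma \ref{lem:3.1}. The paper obtains the counting bound by citing the proof of Theorem 3 of \cite{EJK}: if $k$ satisfies $1<\rho ^{2}+\rho ^{3}+\dots +\rho ^{k}$ (possible exactly when $\rho >(\sqrt{5}-1)/2$), then $\#\mathcal{E}_{n}(x)\geq c(x)2^{n/k}$ for all $x\in (0,1)$. Note that your reduction to the compact set $K=[1-\rho ,\rho ]$ is unnecessary for this route: the $x$-dependence in the Erd\H{o}s--Jo\'{o}--Komornik bound sits only in the multiplicative constant $c(x)$, which vanishes in the $\limsup$, so Lemma \ref{lem:3.1} already yields the \emph{uniform} bound $\ulocdim \mu _{\rho }(x)\leq (1-1/k)\log 2/|\log \rho |$ for every individual $x\in (0,1)$, and hence for the supremum.

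The genuine gap is the step you yourself label ``the main obstacle'' and then only grant: the uniform exponential lower bound on $\#\mathcal{E}_{n}(y)$. The heuristic you offer for it --- that every $y\in K$ admits two distinct bounded-length prefixes both leading back into $K$ --- is false as stated: at $y=\rho $ the digit-$0$ branch gives $S_{0}^{-1}(\rho )=1$, whose only admissible continuation is $111\cdots $, which never re-enters $K$ (symmetrically for the digit-$1$ branch at $y=1-\rho $, and for $y$ near these endpoints the return time to $K$ is unbounded). More importantly, even where two branches exist, exponential growth of $\#\mathcal{E}_{n}(y)$ requires showing that branching recurs with uniformly positive frequency along \emph{every} admissible expansion; this bookkeeping is precisely the content of \cite[Thm.~3]{EJK} and is where the condition $1<\rho ^{2}+\dots +\rho ^{k}$ is actually used. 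Either carry out that argument or cite it explicitly --- as written, the proof assumes its key lemma. (A minor separate slip: $S_{1}^{-1}([0,\rho ^{n}])\cap \lbrack 0,1]=\emptyset $ only once $\rho ^{n}<1-\rho $, so $\mu _{\rho }([0,\rho ^{n}])=2^{-n}$ fails for small $n$; the value $\locdim \mu _{\rho }(0)=\log 2/|\log \rho |$ is nevertheless correct since only the asymptotics matter.)
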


\begin{proof}
In the proof of Theorem 3 of \cite{EJK} it is shown that if $k$ is chosen
such that $1<\rho ^{2}+\rho ^{3}+\dots +\rho ^{k}$, then for any $x\in
(0,1), $ we have $\#\mathcal{E}_{n}(x)\geq c(x)2^{n/k}$ for some $c(x)>0$
and independent of $n$. Thus $\mathcal{N}_{n}(x)\geq c(x)2^{-n(1-1/k)}$.
Appealing to the previous lemma it follows that for all $x\neq 0,1$,
\begin{equation*}
\ulocdim \mu_{\rho}(x) \leq \limsup_{n\rightarrow \infty }\frac{\log (c(x)2^{-n(1-1/k)})}{n\log \rho }\leq \left(1-\frac{1}{k}\right) \frac{\log 2}{|\log \rho |}.
\end{equation*}

The conclusion of the proposition holds since $\locdim \mu _{\rho
}(0)=\locdim \mu _{\rho }(1)=\log 2/\left\vert \log \rho \right\vert$.
\end{proof}

\section{Isolated points in the set of local dimensions}

\label{sec:upper}

We will say the IFS of (\ref{IFS}) has {\em strict overlap} if
$S_{j}(1)>S_{j+1}(0)$ for each $j=0,...,m-1$. Equivalently, $S_{j}(0,1)\cap
S_{j+1}(0,1)\neq \emptyset $ for $j=0,\dots ,m-1$. An example is the IFS
generating the Bernoulli convolution with contraction factor $\rho >1/2$.

\begin{thm}
\label{thm:general biased} Suppose $\mu $ is an equicontractive,
self-similar measure associated with the IFS $(S_{j},p_{j})$ of (\ref{IFS})
that has the strict overlap property. If $p_{0}<p_{j}$ for all $j\neq 0$,
then \begin{equation*}
\sup_{x\neq 0}\ulocdim \mu (x)<\locdim \mu (0)
\end{equation*}
and thus $\locdim \mu (0)$ is an isolated point in the set 
$\{\locdim\mu (x):x\in $supp$\mu \}.$
\end{thm}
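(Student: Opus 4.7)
The proof has two parts: compute $\locdim \mu(0)$, then bound $\ulocdim \mu(x)$ uniformly for $x \neq 0$.

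First, $\locdim \mu(0) = \log p_0 / \log \rho$. Since $d_j > 0$ for $j \geq 1$, for $\epsilon \in (0, d_1)$ the interval $[0, \epsilon]$ lies in $S_0([0,1])$ and meets no other $S_j([0,1])$, so the self-similar identity reduces to $\mu([0, \epsilon]) = p_0 \, \mu([0, \epsilon/\rho])$, and iterating yields the value.

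Set $q = \min_{j \geq 1} p_j > p_0$. By Lemma~\ref{lem:3.1} it suffices to produce a constant $\alpha \in (p_0, q]$, depending only on the IFS, so that $\mathcal{N}_n(x) \geq c(x) \alpha^n$ for every $x \in (0, 1]$ and all large $n$; then $\ulocdim \mu(x) \leq \log \alpha / \log \rho < \log p_0 / \log \rho$ uniformly. For $x \in (0, d_1)$ only $\sigma_1 = 0$ is admissible, so $\mathcal{N}_n(x) = p_0^N \mathcal{N}_{n-N}(x/\rho^N)$ with $N$ the smallest integer making $x/\rho^N \geq d_1$; the factor $p_0^N$ is absorbed into $c(x)$ and the problem reduces to $y \in [d_1, d_1/\rho) \subset [d_1, 1]$. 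For such $y$ there is automatically some $j \geq 1$ with $y \in S_j([0,1])$, giving $\mathcal{N}_m(y) \geq q \, \mathcal{N}_{m-1}(S_j^{-1} y)$. The central task is to construct an address $\sigma = (\sigma_1, \dots, \sigma_m) \in \mathcal{E}_m(y)$ in which a uniform fraction $\beta > 0$ of the entries satisfy $\sigma_i \geq 1$; this gives $p_\sigma \geq (p_0^{1-\beta} q^\beta)^m$ and one may take $\alpha = p_0^{1-\beta} q^\beta > p_0$. I would produce such a $\sigma$ via a lazy rule (at step $i$, set $\sigma_i$ to the smallest index with $y^{(i-1)} \in S_{\sigma_i}([0,1])$ and let $y^{(i)} = S_{\sigma_i}^{-1}(y^{(i-1)})$).

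\textbf{Main obstacle.} The crux is the uniform positive fraction $\beta$. The key quantitative fact is that after any lazy branch step the new point $y^{(i+1)} = (y^{(i)} - d_{\sigma_{i+1}})/\rho$ is bounded below by $1 - \max_k (d_k - d_{k-1})/\rho$, which is strictly positive precisely because of strict overlap; this in turn bounds the length of any subsequent zoom block uniformly, yielding a uniform $\beta > 0$. Making this rigorous---in particular handling the transient before the orbit enters the invariant regime, and arguing that the lazy descent never reaches $0$---is the technical step I expect to require the most care.
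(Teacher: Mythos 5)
Your proposal follows essentially the same route as the paper: both reduce via Lemma \ref{lem:3.1} to producing, for each $x\neq 0$, a single address whose nonzero digits have a uniformly positive density, constructed by a lazy expansion in which strict overlap supplies the uniform gap (your $\delta=1-\max_k(d_k-d_{k-1})/\rho$ is, up to a factor of $\rho$, the paper's $\xi$) that bounds the length of every run of zeros following a nonzero digit. The technical step you flag is carried out in the paper exactly as you outline, so the proposal is sound.
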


\begin{proof}
The strict overlap property is equivalent to the inequalities $d_{j-1}+\rho
>d_{j}$ for all $j=1,\dots ,m$, thus we can choose $0<\xi <\rho $ so 
$d_{j-1}+\rho >d_{j}+\xi $ for all $j=1,\dots ,m$. Choose an integer $J>0$
such that $\rho ^{J}<\xi $.

We claim any $x\neq 0$ has a presentation $(a_{k})$ where the density of
indices $k$ with $a_{k}\neq 0$ exceeds $1/J$. Assume $p_{i}=\min_{j\neq
0}p_{j}$. It follows from the claim that if $x\neq 0$ and $n$ is large, then 
$\mathcal{N}_{n}(x)\geq \left( p_{i}^{1/J}p_{0}^{(J-1)/J}\right) ^{n},$ and
hence by Lemma \ref{lem:3.1} 
\begin{equation*}
\ulocdim \mu(x) \leq \frac{\log p_{i}+(J-1)\log p_{0}}
{J\log \rho }<\frac{\log p_{0}}{\log \rho }=\locdim \mu (0),
\end{equation*}
proving the result.

To prove the claim, we will give an iterative algorithm for producing such a
presentation. This algorithm is essentially the lazy expansion of $x$ with
respect to the alphabet $\mathcal{A}=\{0,1,...,m\}$. To begin, if $x\in
\lbrack 0,d_{1}+\xi ]\subseteq S_{0}[0,1]$ choose $a_{1}=0$; if $x\in
(d_{j}+\xi ,$ $d_{j+1}+\xi ]$ $\subseteq S_{j}[0,1]$ for $j=1,\dots ,m-1$
take $a_{1}=j$; and if $x\in (d_{m-1}+\xi ,1]\subseteq S_{m}[0,1]$ take 
$a_{1}=m$.

Assuming $a_{1},\dots ,a_{N}$ have been chosen, set $\sigma =(a_{1},\dots
,a_{N})$. Then $x\in S_{\sigma }[0,1]$. We put $a_{N+1}=0$ if $x\in
S_{\sigma }[0,d_{1}+\xi ]$, $a_{N+1}=j$ if $x\in S_{\sigma }(d_{j}+\xi , d_{j+1}+\xi ]$ and $a_{N+1}=m$ if $x\in S_{\sigma }(d_{m-1}+\xi ,1]$.

Suppose $x\neq 0$ has presentation $(a_{k})$ under this algorithm. As $x\neq
0$, there is some index $n$ such that $a_{n}\neq 0,$ say $a_{n}=j$ for 
$j\neq 0$. We will see that it is not possible for all of $a_{n+1},\dots
,a_{n+J}=0$. Without loss of generality $j\neq m$. (The arguments are
similar when $j=m$ and will be left for the reader.) Put $\sigma
=(a_{1},\dots ,a_{n-1})$. Then $x\in S_{\sigma }(d_{j}+\xi ,$ $d_{j+1}+\xi ]$,
hence $x-S_{\sigma }(d_{j})\geq \xi \rho ^{|\sigma |}$. If all $a_{n+j}=0$
for $j=1,\dots ,J$, then 
\begin{equation*}
x\in S_{\sigma j\underbrace{0\cdot \cdot \cdot 0}_{J-1}}[0,d_{1}+\xi
]\subseteq S_{\sigma j\underbrace{0\cdot \cdot \cdot 0}_{J-1}}[0,\rho ].
\end{equation*}
Thus 
\begin{eqnarray*}
x &\leq &\sup S_{\sigma j\underbrace{0\cdot \cdot \cdot 0}_{J-1}}[0,\rho
]=S_{\sigma }(d_{j})+\rho ^{J+\left\vert \sigma \right\vert } \\
&<&S_{\sigma }(d_{j})+\xi \rho ^{\left\vert \sigma \right\vert },
\end{eqnarray*}
which is a contradiction.
\end{proof}

Note that the proof actually establishes that 
\begin{equation*}
\sup_{x\neq 0}\ulocdim \mu (x)\leq \frac{\log
(\min_{j\neq 0}p_{j})+(\frac{\log \xi }{\log \rho }-1)\log p_{0}}{\log \xi }
\end{equation*}
where $\xi =\min_{j=1,....,m}(d_{j-1}+\rho -d_{j})$. The corollary below is
a special case of this.

\begin{cor}
If $\mu _{\rho }$ is a biased Bernoulli convolution with $\rho >
(\sqrt{5}-1)/2$ and $p_{0}<p_{1}$, then
\begin{equation*}
\sup_{x\neq 0}\ulocdim \mu _{\rho }(x)\leq 
\frac{2/3\log p_{0}+1/3\log (1-p_{0})}{\log \rho }.
\end{equation*}
\end{cor}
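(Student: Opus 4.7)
The plan is to invoke the proof of Theorem~\ref{thm:general biased} directly in the Bernoulli convolution setting, taking $J=3$, which becomes admissible precisely under the hypothesis $\rho > (\sqrt{5}-1)/2$.

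First I would translate the IFS data to the Bernoulli case: here $m=1$, $d_0 = 0$, and $d_1 = 1-\rho$, so the overlap quantity appearing in the proof of the theorem is $d_0 + \rho - d_1 = 2\rho - 1$; since $\rho > 1/2$ the strict overlap hypothesis is satisfied. The unique smallest probability is $p_0$ by assumption, and $\min_{j\neq 0} p_j = p_1 = 1-p_0$.

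The main algebraic step is to verify that $J = 3$ is admissible in the theorem's construction, i.e., that one may choose $\xi \in (\rho^3, 2\rho - 1)$ so that $\rho^J < \xi$. For this it suffices to check that $\rho^3 < 2\rho - 1$. Factoring
\[
\rho^3 - 2\rho + 1 = (\rho - 1)(\rho^2 + \rho - 1),
\]
the first factor is negative because $\rho < 1$, while the second is positive exactly when $\rho > (\sqrt{5}-1)/2$, so the product is negative and the desired inequality holds.

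With $J = 3$ the proof of Theorem~\ref{thm:general biased} yields $\mathcal{N}_n(x) \geq (p_1^{1/3} p_0^{2/3})^n$ for every $x \neq 0$ and all sufficiently large $n$, which by Lemma~\ref{lem:3.1} gives
\[
\sup_{x\neq 0} \ulocdim \mu_\rho(x) \leq \frac{\tfrac{1}{3}\log(1-p_0) + \tfrac{2}{3}\log p_0}{\log \rho},
\]
as stated. The only real obstacle is the factorization above; note that $J = 2$ cannot work since $\rho^2 < 2\rho - 1$ reduces to $(\rho - 1)^2 < 0$, so the threshold $(\sqrt{5}-1)/2$ is precisely the value at which this method with $J = 3$ first becomes available.
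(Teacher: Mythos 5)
Your proposal is correct and follows the paper's own argument exactly: specialize the proof of Theorem~\ref{thm:general biased} to $m=1$ with overlap quantity $2\rho-1$, and observe that $\rho^3 < 2\rho-1$ precisely when $\rho > (\sqrt{5}-1)/2$ (the paper notes equality at the threshold; your factorization $(\rho-1)(\rho^2+\rho-1)$ makes the same point), so $J=3$ is admissible and the stated bound follows. No gaps.
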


\begin{proof}
In the notation of the proof of Theorem \ref{thm:general biased} we can choose any $\xi <2\rho-1$. As $\rho ^{3}=2\rho -1$ when $\rho =(\sqrt{5}-1)/2$, it follows that
we can choose $\xi >\rho ^{3}$.
\end{proof}

\begin{rmk}
\label{rmk:biasedconv}The $m$-fold convolution of a measure can be defined
inductively as $\mu ^{m}=\mu ^{m-1}\ast \mu $. For the purposes of this
paper, we typically rescale the convolution so that $\mu ^{m}$ still has
support in $[0,1]$. This will not affect dimensionality results. When $\mu $
arises from an equicontractive IFS, then $\mu ^{m}$ is again a self-similar
measure. For example, if $\mu $ is the biased Bernoulli convolution or
Cantor measure with contraction factor $\rho $ and probabilities $p,1-p$,
then $\mu ^{m}$ is the self-similar measure associated with the IFS 
$S_{j}(x)=\rho x+j(1-\rho )/m$ and probabilities 
$p_{j}=\binom{m}{j} p^{j}(1-p)^{m-j}$ for $j=0,...,m$. It is easy to check that this IFS has the
strict overlapping property if $\rho >1/(m+1)$ and thus will have an
isolated point if $p\neq 1-p$.
\end{rmk}

If a stricter overlapping property is satisfied, more can be proven.

\begin{thm}
\label{thm:general unbiased} Suppose $\mu $ is an equicontractive,
self-similar measure associated with the IFS $(S_{j},p_{j})$ of (\ref{IFS})
that has the strict overlap property. Suppose $m\geq 2$ and 
$p_{0}=p_{m}<p_{j}$ for all $j\neq 0,m$. In addition, assume that 
$S_{m-1}(1)>S_{m}S_{1}(0)$ and $S_{1}(0)<S_{0}S_{m-1}(1)$. Then 
\begin{equation*}
\sup_{x\neq 0,1}\ulocdim \mu (x)<\locdim \mu (0)
\end{equation*}
and thus $\locdim \mu (0)=$ $\locdim\mu (1)$ is an
isolated point in the set of local dimensions of $\mu $.
\end{thm}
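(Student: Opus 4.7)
The plan is to adapt the lazy expansion argument of Theorem \ref{thm:general biased} to this symmetric setting, where now $p_0=p_m$ are jointly minimal. It is no longer enough to avoid long runs of a single digit: instead, I want to show that every $x\in(0,1)$ admits a presentation in which the \emph{middle} digits $\{1,\ldots,m-1\}$ occur with a density $\alpha>0$ that is uniform in $x$. Writing $q=\min_{j\ne 0,m}p_j>p_0$, the bound $\mathcal{N}_n(x)\ge q^{\alpha n}p_0^{(1-\alpha)n}$ together with Lemma \ref{lem:3.1} would then give
\begin{equation*}
\ulocdim\mu(x)\le \frac{\alpha\log q+(1-\alpha)\log p_0}{\log\rho}<\frac{\log p_0}{\log\rho}=\locdim\mu(0),
\end{equation*}
yielding the theorem.

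The first step is quantitative: extract gaps from each hypothesis. Strict overlap provides $\xi_1>0$ with $d_{j-1}+\rho>d_j+\xi_1$; unpacking $S_{m-1}(1)>S_mS_1(0)$ as $d_{m-1}+\rho>d_m+\rho d_1$ gives $\xi_2>0$ with $d_{m-1}+\rho>d_m+\rho d_1+\xi_2$, and the mirror hypothesis $S_1(0)<S_0S_{m-1}(1)$ provides $\xi_3>0$. Set $\xi=\min(\xi_1,\xi_2,\xi_3)$ and choose $J$ with $\rho^J<\xi$ (enlarging $J$ as needed so the auxiliary bounds below also hold). Starting from the lazy-left presentation of $x$ (as in Theorem \ref{thm:general biased}), apply two rewriting moves repeatedly until neither can be triggered. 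Move~A: whenever a block $(m,0,\ldots,0)$ of total length $J$ occurs at positions $n,\ldots,n+J-1$, the $\xi_2$-inequality implies that the residual $y_{n-1}$ lies in $S_{m-1}([0,1])$, so we may replace $a_n=m$ by $m-1$ and re-run the lazy algorithm from position $n+1$; the new residual $y_n'>\xi/\rho>\rho^{J-1}$ then excludes any subsequent block of $J-1$ zeros. Move~B is the mirror rewrite of $(0,m,\ldots,m)$ starting with digit $1$, justified by the $\xi_3$-inequality.

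The crucial observation is that every application of Move~A or Move~B \emph{inserts a middle digit at position $n$}, since $m-1,1\in\{1,\ldots,m-1\}$ (using $m\ge 2$). So it suffices to show that after exhaustively applying these moves, the resulting sequence contains no window of $J'$ consecutive positions devoid of middle digits, for some $J'$ depending only on $J$. Pure runs of $J$ consecutive $0$s (or, by symmetry, of $m$s) are ruled out as in Theorem \ref{thm:general biased}, using that $x\ne 0,1$ ensures the residual stays at least $\xi\rho^{|\sigma|}$ from the relevant endpoint at the latest non-trivial prefix. Any mixed $\{0,m\}$-block of length $J'$ must contain a $m0^{J-1}$ or $0m^{J-1}$ sub-block, which would have triggered one of the two moves. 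The bound $\mathcal{N}_n(x)\ge q^{n/J'}p_0^{n(1-1/J')}$ and the desired strict inequality then follow.

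The main obstacle will be the combinatorial step: controlling the cascade of re-runs produced by Moves~A and~B, and verifying that the globally applied procedure terminates with a presentation having the uniform middle-digit density. I expect the cleanest way is to analyse the residual $y_n$ directly along any purported long $\{0,m\}$-block, using the two expanding maps $y\mapsto y/\rho$ and $y\mapsto(y-d_m)/\rho$, and to show that the extra hypotheses force $y_n$ into the interior interval $(d_1+\xi,d_{m-1}+\xi)$ within $O(J)$ steps, at which point lazy-left itself selects a middle digit.
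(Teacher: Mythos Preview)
Your overall strategy---construct a presentation in which the middle digits $\{1,\ldots,m-1\}$ occur with uniform positive density and then apply Lemma~\ref{lem:3.1}---is exactly right, and matches the paper's. But the rewriting approach via Moves~A and~B has a genuine gap. Your claim that ``any mixed $\{0,m\}$-block of length $J'$ must contain a $m0^{J-1}$ or $0m^{J-1}$ sub-block'' is simply false: the alternating word $0m0m\cdots$ contains neither pattern once $J\ge 3$. So the two moves do not suffice to eliminate all long $\{0,m\}$-blocks, and the cascade analysis you flag as the ``main obstacle'' cannot be closed with these moves alone. (Also, your justification of Move~A invokes the $\xi_2$-inequality, but what you actually use is strict overlap---the $\xi_1$-inequality; the role of $\xi_2,\xi_3$ is different, as explained below.)

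The paper avoids rewriting entirely by building the right algorithm from the start. Partition $[0,1]$ into $L=[0,d_1+\xi)$, $R=(d_{m-1}+\rho-\xi,1]$, and the middle pieces $M_j=[d_j+\xi,d_j+\rho-\xi]$ for $j=1,\ldots,m-1$; then at each step set $a_n=j$ if the residual lies in $M_j$, else $0$ or $m$ according to $L$ or $R$. The crucial point---and this is precisely what the extra hypotheses $S_{m-1}(1)>S_mS_1(0)$ and $S_1(0)<S_0S_{m-1}(1)$ buy you---is that once a middle digit $j$ has been chosen and the residual then lands in $L$, it \emph{cannot} subsequently land in $R$: one checks $\sup S_j(L)<\inf S_{j0}(R)$ directly from $\rho(d_{m-1}+\rho-\xi)>d_1+\xi$. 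Symmetrically for $R$. Hence, after a middle digit, the ensuing bad digits are a pure run of $0$'s (or a pure run of $m$'s), never a mixed $\{0,m\}$-word, and the length argument from Theorem~\ref{thm:general biased} applies verbatim to bound that run by $J$. Your closing paragraph is heading toward this, but the decisive observation you are missing is the one-way trapping: from $L$ you cannot reach $R$ without first passing through some $M_j$.
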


\begin{proof}
The additional overlapping condition, $S_{m-1}(1)>S_{m}S_{1}(0)$ and 
$S_{1}(0)<S_{0}S_{m-1}(1),$ is equivalent to the two inequalities $\rho
(d_{m-1}+\rho )>d_{1}$ and $d_{m}+\rho d_{1}<d_{m-1}+\rho $. Thus we can
choose $\xi >0$ such that for each $j$,

\begin{enumerate}
\item $d_{j}+\rho -\xi >d_{j+1}+\xi $

\item $\rho (d_{m-1}+\rho -\xi )>d_{1}+\xi $

\item $d_{m}+\rho (d_{1}+\xi )<d_{m-1}+\rho -\xi .$
\end{enumerate}

Let
\begin{equation*}
L=[0,d_{1}+\xi ),R=(d_{m-1}+\rho -\xi ,1],
\end{equation*}
\begin{equation*}
M_{j}=[d_{j}+\xi ,d_{j}+\rho -\xi ]\text{ for }j=1,\dots ,m-1
\end{equation*}
and $M=\bigcup\limits_{j=1}^{m-1}M_{j}$. Then $[0,1]=L\cup M\cup R$.
One should observe that properties (1)-(3) ensure that $\bigcup
\limits_{\sigma }S_{\sigma }(M)=(0,1)$ where the union is taken over all words 
$\sigma$ on the alphabet $\{0,1,\dots ,m\}$.

As in the previous proof, we claim that if $\rho ^{J}<\xi $, then any $x\neq
0,1$ has a presentation $(a_{i})$ where the density of indices $i$ with 
$a_{i}\neq 0,m$ is at least $1/J$.

We use the following algorithm to produce the presentation: Take $a_{1}=j$
if $x\in M_{j}$ (if there is a non-unique choice, choose either index). If 
$x\notin \bigcup\limits_{j}M_{j}$, then either $x\in L$ or $R$ and we take 
$a_{1}=0$ or $m$ respectively. Now assume $a_{1},\dots ,a_{n-1}$ have been
determined and put $\sigma =(a_{1},\dots ,a_{n-1})$, so $x\in S_{\sigma
}[0,1]$. If $x\in S_{\sigma }(M_{j})$ take $a_{n}=j,$ while if $x\in
S_{\sigma }(L)$ or $S_{\sigma }(R)$ take $a_{n}=0,m$ respectively.

If $x\neq 0,m$ then there must be an index $n$ where $a_{n}=j\in \{1,\dots
,m-1\}$. We claim that $a_{n+k}\neq 0,m$ for some $k<J$. That is, we cannot
have all of $a_{n+1}, a_{n+2}, \dots, a_{n+J} \in \{0,m\}$.

To prove this claim, put $\sigma =(a_{1},\dots ,a_{n-1})$. As $a_{n}=j$, we
have
\begin{equation*}
x\in S_{\sigma }(M_{j})\subseteq S_{\sigma j}[0,1]=S_{\sigma j}(L)\cup
S_{\sigma j}(M)\cup S_{\sigma j}(R).
\end{equation*}
If $x\in S_{\sigma j}(M),$ then $x\in S_{\sigma j}(M_{k})$ for some $k\neq
0,m$ and we are done since that ensures $a_{n+1}\neq 0,m$. So we can assume 
$x\in S_{\sigma j}(L)$ or $S_{\sigma j}(R)$.

We will assume that $x \in S_{\sigma j}(L)$; the latter case is similar.
Hence $a_{n+1}=0$. Upon rescaling we can assume 
\begin{equation*}
x\in M_{j}=[d_{j}+\xi ,d_{j}+\rho -\xi ]\subseteq \lbrack d_{j},d_{j}+\rho
]=S_{j}[0,1]
\end{equation*}
and $x\in S_{j}(L)$ where
\begin{eqnarray*}
S_{j}(L) &=&[d_{j},d_{j}+\rho (d_{1}+\xi ))\subseteq \lbrack
d_{j},d_{j}+\rho ^{2}] \\
&=&S_{j}[0,\rho ]=S_{j0}[0,1]=S_{j0}(L\cup M\cup R).
\end{eqnarray*}
But 
\begin{equation*}
\inf S_{j0}(R)=S_{j0}(d_{m-1}+\rho -\xi )=d_{j}+\rho ^{2}(d_{m-1}+\rho -\xi
).
\end{equation*}
Property (2) thus implies that $\sup S_{j}(L)<\inf S_{j0}(R)$ and hence we
must actually have $x\in S_{j0}(L\cup M)$. Thus $a_{n+2}\neq m$. If 
$a_{n+2}\neq 0$ we are done and otherwise $x\in S_{j0}(L)$. We repeat the
argument. Since $S_{j\underbrace{0\cdot \cdot \cdot 0}_{J-1}}(L)$ is an
interval of length at most $\rho ^{J}$ with left endpoint $d_{j}$ and $x\geq
d_{j}+\xi $, we see that if $\rho ^{J}<\xi $ we cannot have $x\in 
S_{j \underbrace{0\cdot \cdot \cdot 0}_{J-1}}(L)$. This completes the proof.
\end{proof}

\begin{cor}
Suppose $\mu $ is the self-similar measure associated with the IFS 
\begin{equation*}
S_{j}(x)=\rho x+\frac{j}{m}(1-\rho ),
\end{equation*}
with $j=0,...,m,$ $m\geq 2,$ and probabilities $p_{j}$ that satisfy 
$ p_{0}=p_{m}<\min_{j\neq 0,k}p_{j}$. If 
\begin{equation*}
\rho >\frac{\sqrt{m^{2}+4}-m}{2},
\end{equation*}
then the set of local dimensions of $\mu $ has an isolated point.
\end{cor}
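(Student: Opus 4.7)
The plan is to verify that the hypotheses of Theorem \ref{thm:general unbiased} are all satisfied for this IFS, so that the conclusion follows immediately. With translations $d_j = j(1-\rho)/m$, consecutive gaps are uniform and equal to $(1-\rho)/m$, so the strict overlap condition $d_{j-1} + \rho > d_j$ reduces to the single inequality $\rho > 1/(m+1)$. The probability hypothesis $p_0 = p_m < p_j$ for $j \neq 0, m$ is given directly.

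For the two asymmetric extra overlap conditions, $S_{m-1}(1) > S_m S_1(0)$ and $S_1(0) < S_0 S_{m-1}(1)$, I would substitute the explicit forms of the $S_j$ and clear denominators. The first becomes
\begin{equation*}
\rho + (m-1)(1-\rho)/m > (1-\rho)(\rho + m)/m,
\end{equation*}
while the second becomes
\begin{equation*}
(1-\rho)/m < \rho^2 + \rho(m-1)(1-\rho)/m.
\end{equation*}
A short expansion shows that both inequalities collapse to the same quadratic condition $\rho^2 + m\rho - 1 > 0$. The positive root of $\rho^2 + m\rho - 1 = 0$ is precisely $(\sqrt{m^2+4} - m)/2$, so the hypothesis $\rho > (\sqrt{m^2+4} - m)/2$ is exactly what is needed.

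It remains to note that this threshold is strong enough to also guarantee $\rho > 1/(m+1)$: the inequality $(\sqrt{m^2+4} - m)/2 > 1/(m+1)$, after squaring and simplifying, reduces to $m > 0$. Hence all hypotheses of Theorem \ref{thm:general unbiased} hold, and we conclude that $\locdim \mu(0) = \locdim \mu(1)$ is an isolated point in the set of local dimensions of $\mu$.

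The only real obstacle is the algebraic bookkeeping: one must notice that the two apparently asymmetric overlap conditions collapse to the same symmetric quadratic in $\rho$, and that its positive root coincides exactly with the stated bound. Once this coincidence is recorded, the corollary is a direct consequence of Theorem \ref{thm:general unbiased} and no further ideas are required.
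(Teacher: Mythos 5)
Your proposal is correct and follows exactly the route of the paper, which simply asserts that ``a routine calculation'' verifies the overlap hypotheses of Theorem \ref{thm:general unbiased}; you carry out that calculation explicitly, correctly reducing both extra overlap conditions to $\rho^{2}+m\rho-1>0$ whose positive root is $(\sqrt{m^{2}+4}-m)/2$, and correctly checking that this threshold also forces the strict overlap condition $\rho>1/(m+1)$.
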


\begin{proof}
A routine calculation shows that the overlap requirements of Theorem
\ref{thm:general unbiased}, $d_{j}<d_{j-1}+\rho ,$ $\rho (d_{m-1}+\rho )>d_{1}$
and $d_{m}+\rho d_{1}<d_{m-1}+\rho ,$ are satisfied for such $\rho $.
\end{proof}

We remark that $(\sqrt{m^{2}+4}-m)/2<1/m,$ so this improves upon the fact
that the local dimension of the $m$-fold convolution of the uniform Cantor
measure on the Cantor set with ratio $1/m$ has an isolated point at $0,$ as
shown in \cite{BHM, Sh}. Note that when $\rho =1/(m+1),$ the IFS satisfies
the open set condition and hence there is no isolated point.

\section{Computational techniques to find upper and lower bounds}

\subsection{Upper bounds}

\label{sec:comp}

Proposition \ref{Erdos} and Theorem \ref{thm:general biased} give upper
bounds on the local dimension of the Bernoulli convolution $\mu _{\rho }$ at 
$x$ for any $x\in (0,1)$ in the unbiased ($\rho >(\sqrt{5}-1)/2)$ and biased
($\rho >1/2)$ cases respectively. Upper bounds are also given in 
\cite{Baker17}. In all of these cases, these bounds can be used to show that the
local dimension at $x=0$ is an isolated point within the set of all possible
local dimensions. However, while sufficient to demonstrate a gap in the set
of local dimensions, these bounds are not tight. This section will discuss
computational techniques that can be used to improve the upper bounds for
the set of local dimensions for $x\in (0,1)$. We do this for both the
unbiased and biased Bernoulli convolutions.

In the case of the unbiased Bernoulli convolution, we see that $
\locdim \mu _{\rho }(0)=\log 2/|\log \rho |$. This is given by the blue
curve in Figure \ref{fig:gap}. Further, as shown in Proposition \ref{Erdos}
(and \cite{EJK}), taking $k$ such that $1<\rho ^{2}+\dots +\rho ^{k}$, gives 
\begin{equation*}
\ulocdim \mu _{\rho }(x)\leq \left( 1-\frac{1}{k} \right) \frac{\log 2}{|\log \rho |}\text{ for }x\in (0,1).
\end{equation*}
This formula is shown by the red curve in Figure \ref{fig:gap}. The black
curve in Figure \ref{fig:gap} shows the tighter upper bound given by an
application of Theorem \ref{thm:upper} below.

Now consider the case of the biased Bernoulli convolutions. Assume 
$p_{0}<p_{1}$. It is easy to see that $\locdim \mu _{\rho
}(0)=\log p_{0}/\log \rho $. This is given by the blue curve in Figure 
\ref{fig:biased gap}. Both Theorem \ref{thm:general biased} and Baker in 
\cite{Baker17}, show there exists some $k$ such that 
\begin{equation*}
\ulocdim \mu_{\rho }(x)\leq \frac{\log
p_{0}+(k-1)p_{1}}{k\log \rho }.
\end{equation*}
The choice of $k$ varies in the two approaches and depends on $\rho $. When 
$\rho <\frac{\sqrt{5}-1}{2}$, the $k$ found by Theorem \ref{thm:general
biased} results in a tighter upper bound than that found in \cite{Baker17},
while for $\rho >\frac{\sqrt{5}-1}{2}$, the converse is true. The green
curve in Figure \ref{fig:biased gap} shows the upper bound given by Theorem 
\ref{thm:general biased}, while the red curve shows the bound found in 
\cite{Baker17}. The black curve is, again, the upper bound found using Theorem 
\ref{thm:upper}.

\begin{figure}[tbp]
\includegraphics[width=300pt,height=400pt,angle=270]{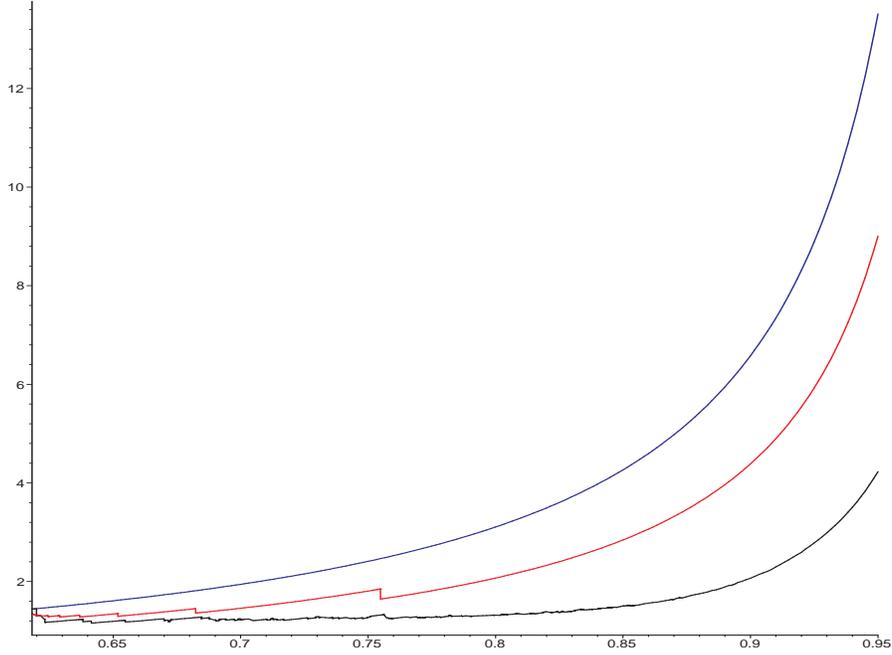}
\caption{Upper Bounds for local dimensions of unbiased Bernoulli
convolutions }
\label{fig:gap}
\end{figure}

\begin{figure}[tbp]
\includegraphics[width=300pt,height=400pt,angle=270]{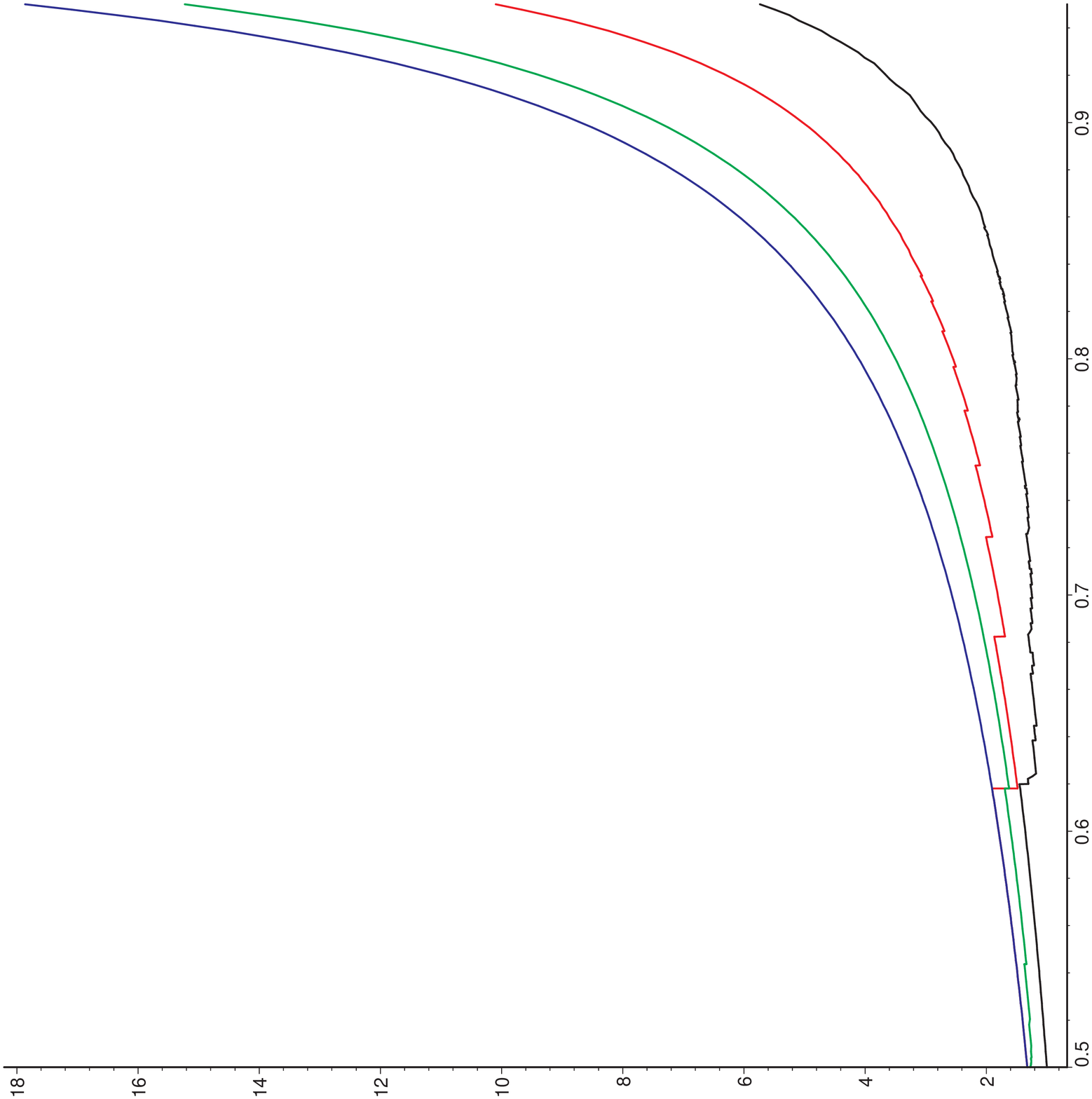}
\caption{Upper Bounds for local dimensions of biased $(p_0 = 0.4, p_1 = 0.6)$
Bernoulli convolutions}
\label{fig:biased gap}
\end{figure}

Let $\mu$ be a self-similar measure with support $[0,1]$.
Let $I$ be a subset of $[0,1]$. We generalize equation (\ref{E}) to give 
\begin{eqnarray*}
\mathcal{E}_{n}(x,I) &=&\{\sigma \in \mathcal{A}^{n}:\ \text{there exists a}
\ \tau \in \mathcal{A}^{\mathbb{N}}\ \text{such that}\ S_{\sigma \tau
}(0)=x\ \text{and}\ S_{\tau }(0)\in I\} \\
&=&\{\sigma \in \mathcal{A}^{n}:x\in S_{\sigma }(I)\}.
\end{eqnarray*}
We similarly generalize equation (\ref{N}) to give 
\begin{equation*}
\mathcal{N}_{n}(x,I)=\sum_{\sigma \in \mathcal{E}_{n}(x,I)}p_{\sigma }.
\end{equation*}
We easily see that $\mathcal{N}_{n}(x,I)\leq \mathcal{N}_{n}(x)$ for all 
$x,I $ and $n$.

\begin{thm}
Let $\mu$ be a self-similar measure with support $[0,1]$.
\label{thm:upper} Let $I\subset \lbrack 0,1]$ be an open interval such that
for all $x\in (0,1)$ there exists a word $\sigma $ with $x\in S_{\sigma }(I)$. 
Let $k=\min_{x\in I}\mathcal{N}_{n}(x,I)$. Then 
\begin{equation*}
\ulocdim \mu(x)\leq \frac{\log k}{n\log
\rho }.
\end{equation*}
\end{thm}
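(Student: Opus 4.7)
The plan is to establish an exponential lower bound of the form $\mathcal{N}_M(x) \gtrsim k^{M/n}$ for every $x\in(0,1)$ and then invoke Lemma \ref{lem:3.1}. The mechanism is to iterate the self-similar relation in blocks of length $n$, arranging the iteration so that at every stage the ``current'' point stays inside $I$, which allows the hypothesis $\mathcal{N}_n(\,\cdot\,,I)\geq k$ to be reapplied.

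First I would handle the case $x_0\in I$. The key observation is that for any $\sigma_1\in\mathcal{E}_n(x_0,I)$, the preimage $y_1:=S_{\sigma_1}^{-1}(x_0)$ lies in $I$ by the very definition of $\mathcal{E}_n(\,\cdot\,,I)$, so the hypothesis can be reapplied to $y_1$: $\mathcal{N}_n(y_1,I)\geq k$. Each concatenation $\sigma_1\sigma_2$ with $\sigma_2\in\mathcal{E}_n(y_1,I)$ is a distinct element of $\mathcal{E}_{2n}(x_0,I)$, so
\begin{equation*}
\mathcal{N}_{2n}(x_0,I)\;\geq\;\sum_{\sigma_1\in\mathcal{E}_n(x_0,I)}p_{\sigma_1}\cdot\mathcal{N}_n(y_1,I)\;\geq\;k\cdot k,
\end{equation*}
and an induction on $N$ gives $\mathcal{N}_{nN}(x_0,I)\geq k^N$ for every $x_0\in I$ and every $N\geq 0$.

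Next I would transfer this bound to an arbitrary $x\in(0,1)$. The covering hypothesis supplies a word $\sigma_0$ of some length $\ell$ with $x\in S_{\sigma_0}(I)$; put $x_0:=S_{\sigma_0}^{-1}(x)\in I$. Prepending $\sigma_0$ to the words of $\mathcal{E}_{nN}(x_0,I)$ produces distinct elements of $\mathcal{E}_{nN+\ell}(x)$, giving $\mathcal{N}_{nN+\ell}(x)\geq p_{\sigma_0}k^N$. To fill in the remaining scales $M=nN+\ell+r$ with $0\leq r<n$, I would use that $\bigcup_{|\tau|=r}S_\tau([0,1])=[0,1]$ to extend each word in $\mathcal{E}_{nN+\ell}(x)$ by a suitable suffix of length $r$, at a multiplicative cost of at most $(\min_j p_j)^r$; this yields $\mathcal{N}_M(x)\geq p_{\sigma_0}(\min_j p_j)^r k^N$.

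Finally, taking logarithms and dividing by $M\log\rho$ (which is negative, so the inequality reverses), and letting $M\to\infty$ so that $N/M\to 1/n$ while the bounded prefactors wash out, I obtain $\limsup_M \log\mathcal{N}_M(x)/(M\log\rho)\leq \log k/(n\log\rho)$; Lemma \ref{lem:3.1} then delivers the conclusion. The hard part is really just the opening observation that $\mathcal{E}_n(\,\cdot\,,I)$ is exactly the bookkeeping needed to close the iteration inside $I$; beyond that, the only pitfalls are sign care (because $\log\rho<0$) and ensuring that distinct pairs of blocks give distinct concatenations.
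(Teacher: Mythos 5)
Your proposal is correct and follows essentially the same route as the paper's proof: iterate the bound $\mathcal{N}_n(\cdot,I)\geq k$ in blocks of length $n$ while the rescaled point stays in $I$, transfer to general $x\in(0,1)$ by prepending the covering word $\sigma_0$, handle intermediate scales, and finish with Lemma \ref{lem:3.1}. The only (immaterial) difference is at the intermediate scales: you extend words by a suffix of length $r<n$ at bounded multiplicative cost, whereas the paper uses that $\mathcal{N}_N(x)$ is non-increasing in $N$, so $\mathcal{N}_N(x)\geq k^{m}$ already for $(m-1)n\leq N\leq mn$; both devices wash out in the limit.
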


For a given measure $\mu$, if we can find an interval $I$ that satisfies
the requirements of the theorem and can compute $k$, then we will have a
computational method to find an upper bound for $\ulocdim \mu (x)$ for $x\in (0,1)$.

\begin{proof}
First, assume that $x\in I$. We will further assume that $k > 0$, otherwise
the bound is trivial. Hence there exists at least one $\sigma$ such that $x
\in S_{\sigma}(I)$. This gives us that 
\begin{eqnarray*}
\mathcal{E}_{2n}(x,I) &=&\{\sigma\in \mathcal{A}^{2n}:x\in S_{\sigma}(I)\} \\
&=&\{\sigma _{1},\sigma _{2}\in \mathcal{A}^{n}:x\in S_{\sigma _{1}\sigma
_{2}}(I)\} \\
&\supseteq &\{\sigma _{1},\sigma _{2}\in \mathcal{A}^{n}:x\in S_{\sigma
_{1}\sigma _{2}}(I),x\in S_{\sigma _{1}}(I)\} \\
&=&\{\sigma _{1},\sigma _{2}\in \mathcal{A}^{n}:x\in S_{\sigma
_{1}}(I),S_{\sigma _{1}}^{-1}(x)\in S_{\sigma _{2}}(I)\} \\
&=&\{\sigma _{1},\sigma _{2}:\sigma _{1}\in \mathcal{E}_{n}(x,I),\sigma
_{2}\in \mathcal{E}_{n}(S_{\sigma _{1}}^{-1}(x),I)\}.
\end{eqnarray*}
Note that if $x\in S_{\sigma _{1}}(I),$ then $S_{\sigma _{1}}^{-1}(x)$ is
well defined and in $I$.

From this it follows that 
\begin{equation*}
\mathcal{N}_{2n}(x)\geq \mathcal{N}_{2n}(x,I)\geq \sum_{\sigma _{1}\in 
\mathcal{E}_{n}(x,I)}\sum_{\sigma _{2}\in \mathcal{E}_{n}(S_{\sigma
_{1}}^{-1}(x),I)}p_{\sigma _{1}}p_{\sigma _{2}}\geq k^{2}.
\end{equation*}
Similarly $\mathcal{N}_{mn}(x)\geq k^{m}$. Furthermore for $mn\geq N\geq
(m-1)n$ we have $\mathcal{N}_{N}(x)\geq k^{m}$. Taking limits gives 
\begin{equation*}
\ulocdim \mu(x) \leq 
\limsup_{N\rightarrow \infty }\frac{\log \mathcal{N}_{N}(x)}{\log \rho ^{N}}
\leq \lim_{m\rightarrow \infty }\frac{\log k^{m}}{\log \rho ^{n(m-1)}}\leq 
\frac{\log k}{n\log \rho }.
\end{equation*}

The case for $x\in (0,1)\diagdown I$ is similar. We know there is some 
$\sigma ,$ say of length $t,$ such that $x\in S_{\sigma }(I)$. As $S_{\sigma
}^{-1}(x)\in I$, we have that $\mathcal{N}_{nm}(S_{\sigma }^{-1}(x),I)\geq
k^{m}$ as above. Thus $\mathcal{N}_{mn+t}(x,I)\geq p_{\sigma }k^{m},$ and
the result follows as before, taking limits.
\end{proof}

Next, we will demonstrate how the theorem can be implemented to produce
upper bounds on local dimensions by means of an example. Consider the
unbiased Bernoulli convolution with contraction factor $\rho =0.8$ and let 
$I=(0.3,0.7)$. One can check that $\bigcup_{|\sigma |=1}S_{\sigma
}(I)=(0.24,0.76)$ and in general that $\bigcup_{|\sigma |=n}S_{\sigma
}(I)=(0.3(0.8)^{n},1-0.3(0.8)^{n})$. It follows that the hypothesis of the
theorem is satisfied. There are 16 images of $S_{\sigma }(I)$ for $|\sigma
|=4$; these are given in Table \ref{tab:example}. 
\begin{table}[tbp]
\begin{tabular}{l|l}
$\sigma$ & $S_\sigma([0.3, 0.7]$ \\ \hline
{}0000 & [.12288, .28672] \\ 
{}0001 & [.22528, .38912] \\ 
{}0010 & [.25088, .41472] \\ 
{}0100 & [.28288, .44672] \\ 
{}1000 & [.32288, .48672] \\ 
{}0011 & [.35328, .51712] \\ 
{}0101 & [.38528, .54912] \\ 
{}0110 & [.41088, .57472] \\ 
{}1001 & [.42528, .58912] \\ 
{}1010 & [.45088, .61472] \\ 
{}1100 & [.48288, .64672] \\ 
{}0111 & [.51328, .67712] \\ 
{}1011 & [.55328, .71712] \\ 
{}1101 & [.58528, .74912] \\ 
{}1110 & [.61088, .77472] \\ 
{}1111 & [.71328, .87712] 
\end{tabular}
\caption{Images of $S_\protect\sigma([0.3, 0.7])$ for $|\protect\sigma|=4$}
\label{tab:example}
\end{table}
One can readily check that for each $x$ $\in (0.3,0.7)$ there are at least 
$3$ words $\sigma $ such that $x\in S_{\sigma }(I)$. Thus $\mathcal{N}_{4}(x,(0.3,0.7))\geq 3/16$. Hence $k\geq 3/16$ and $\ulocdim \mu_\rho (x)\leq \frac{\log (3/16)}{4\log (0.8)} \sim 1.876$.

These calculations while exact, are also locally constant. It can be shown
that there is a neighbourhood around $\rho =0.8$ and around the endpoint $0.3
$ and $0.7$ such that $k\geq 3/16$ within this neighbourhood. This comment
is true in general, not just for the special case of $\rho = 0.8$, $I=[0.3,0.7]$ and $n=4$.

This process can be generalized to other Bernoulli convolutions and
automated. Consider, first, the interval $I=(a,1-a)$ where $\rho a+1-\rho
<1/2$ and $\rho (1-a)>1/2$. (For example, take any $a$ satisfying 
$0<a<1-1/(2\rho ).$) Then $S_{0}(I)\cup S_{1}(I)=(\rho a,1-\rho a)$ and,
more generally, $\bigcup_{|\sigma |=n}S_{\sigma }(I)=(\rho ^{n}a,1-\rho
^{n}a)$. It is clear that the hypothesis of the theorem is satisfied for
such $I$.

Consider, next, an interval $I=(b,1-b)$ where $b$ may be larger than 
$1-1/(2\rho )$. If there exists a choice of $a$ with $0<a<1-1/(2\rho )$ and
integer $n$ such that $(a,1-a)\subseteq $ $\bigcup_{|\sigma |=n}S_{\sigma
}(I),$ then again $I$ will satisfy the hypothesis of the theorem.

For the purposes of the graphs, we considered the intervals $(\frac{1}{2}
(1-1/(2\rho)), 1-\frac{1}{2}(1-1/(2\rho))) =
(\frac{1}{2} - \frac{1}{4\rho}, \frac{1}{2} + \frac{1}{4\rho})$, 
$(0.1,0.9)$, $(0.2,0.8)$ and $(0.3,0.7)$. The first always
satisfies the conditions of the theorem and we compute the associated $k$.
The other three may or may not depending on whether we can find a choice of 
$n \leq 10$ as above where we view these intervals as the choice $(b,1-b)$
and understand the first interval as $(a,1-a)$. If we can quickly find a
suitable $n$, we compute the associated $k$. Otherwise we ignore the
interval. We take the minimum $k$ resulting from these choices of intervals.

A similar method can be used for any self-similar measure with non-trivial
overlaps, with the details being left to the reader.

\subsection{Lower bounds}

\label{sec:lower}

In Section \ref{sec:comp}, we showed how one could use computational
techniques to find upper bounds for the local dimension for $\mu_{\rho }(x)$
for any $x\in (0,1)$. Similar techniques can be used to find lower bounds
for the range of possible local dimensions assuming the IFS satisfies a
suitable separation condition.

The IFS (or any associated self-similar measure) with contraction factor 
$\rho $ is said to satisfy the {\em weak separation condition} (wsc) if
there is a constant $c>0$ such that whenever $\sigma ,\tau \in \mathcal{A}^{n}$, then either 
\begin{equation}
S_{\sigma }(0)=S_{\tau }(0)\text{ or }\left\vert S_{\sigma }(0)-S_{\tau
}(0)\right\vert \geq c\rho ^{n}.
\end{equation}

The following definition is equivalent to that of \cite{FengSalem}.

\begin{defn}
A equicontractive IFS with ratio of contraction $\rho $ satisfies the 
{\em asymptotically weak separation condition} (asymptotically wsc) if
there exists a sequence $f(n)$ such that $\log f(n)/n\rightarrow 0$ as 
$n\rightarrow \infty $, and such that for each $n\in \mathbb{N}$ and each 
$x\in \lbrack 0,1]$ we have 
\begin{equation}
\#\{S_{\sigma }[0,1]:\sigma \in \mathcal{A}^{n},S_{\sigma }[0,1]\cap (x-\rho
^{n},x+\rho ^{n})\neq \emptyset \}\leq f(n)  \label{awsc}
\end{equation}
\end{defn}

The weak separation condition implies the asymptotically wsc, with the
latter being strictly weaker. Indeed, as observed in \cite{FengSalem}, a
Bernoulli convolution with contraction factor the reciprocal of a Salem
number\footnote{A Salem number is a real algebraic integer, greater than $1$, such that all of its Galois conjugates $\leq 1$ in absolute value and at least one
conjugate is $1$ in absolute value.} in $(1,2)$ satisfies the asymptotically
wsc, but not the wsc. It is widely believed that if there are any
contraction factors which give rise to purely singular Bernoulli
convolutions other than reciprocals of Pisot numbers, then the prime
candidates would be reciprocals of Salem numbers.

It is worth noting that if $\rho \in (1/2, 1)$ is transcendental then 
$S_\sigma(0) \neq S_\tau(0)$ for all $\sigma \neq \tau$ and hence all images
on the left hand side of \eqref{awsc} are unique. Hence if $\rho$ is
transcendental then the IFS canot satisfy the asympototically  weak
separation condition.

We can obtain lower bounds in the spirit of Lemma \ref{lem:3.1} under the
assumption of the asymptotically wsc.

\begin{lemma}
Assume $\mu $ is an equicontractive, self-similar measure with contraction
factor $\rho $ that satisfies the asymptotically weak separation condition.
Then for all $x\in $supp$\mu $ we have 
\begin{equation*}
\llocdim \mu(x) \geq \liminf_{n\rightarrow \infty }
\frac{\log (\sup_{y}\mathcal{N}_{n}(y))}{n\log \rho }.
\end{equation*}
\end{lemma}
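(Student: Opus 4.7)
The plan is to dualize the argument of Lemma \ref{lem:3.1}: produce an upper bound on $\mu([x-\rho^n, x+\rho^n])$ from the self-similarity relation together with the asymptotic wsc, then convert it into a lower bound on $\llocdim \mu(x)$.

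First I would iterate (\ref{ssMeas}) to write $\mu = \sum_{\sigma \in \A^n} p_\sigma \, \mu \circ S_\sigma^{-1}$. Since $\supp \mu = [0,1]$, when applied to $A = [x-\rho^n, x+\rho^n]$ only those $\sigma$ with $S_\sigma([0,1]) \cap A \neq \emptyset$ contribute, and each contributes at most $p_\sigma$, so
\[
\mu([x - \rho^n, x + \rho^n]) \leq \sum_{\sigma \in \A^n,\, S_\sigma([0,1]) \cap [x - \rho^n, x + \rho^n] \neq \emptyset} p_\sigma.
\]
Next I would group the indices in this sum by $y = S_\sigma(0)$ (equivalently, by their common image $[y, y+\rho^n]$). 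Since $y = S_\sigma(0)$ forces $y \in S_\sigma([0,1])$, the set $\{\sigma \in \A^n : S_\sigma(0) = y\}$ is contained in $\E_n(y)$, so the inner sum is at most $\N_n(y) \leq \sup_{y'} \N_n(y')$. The asymptotic weak separation condition caps the number of distinct images meeting the open ball $(x-\rho^n, x+\rho^n)$ by $f(n)$; at most two extra distinct images can touch the closed interval only at an endpoint, so the outer sum has at most $f(n) + 2$ values of $y$. This yields
\[
\mu([x - \rho^n, x + \rho^n]) \leq (f(n) + 2)\, \sup_{y} \N_n(y).
\]

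To finish, for small $\epsilon$ choose $n$ with $\rho^{n+1} \leq \epsilon \leq \rho^n$, so $\mu([x-\epsilon, x+\epsilon]) \leq \mu([x-\rho^n, x+\rho^n])$ and $|\log \epsilon| \leq (n+1) |\log \rho|$. For $n$ large the right-hand side of the displayed inequality is less than $1$, so $\log \mu$ and $\log \epsilon$ are both negative, and tracking signs gives
\[
\frac{\log \mu([x - \epsilon, x + \epsilon])}{\log \epsilon} \geq \frac{\log\bigl[(f(n) + 2) \sup_{y} \N_n(y)\bigr]}{(n+1) \log \rho}.
\]
Taking $\liminf$ as $\epsilon \to 0$ (hence $n \to \infty$), and using $\log f(n)/n \to 0$ together with $(n+1)/n \to 1$, produces the asserted bound. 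The main point requiring care is the grouping step: the asymptotic wsc controls only the number of \emph{distinct} cylinders, while the original sum is indexed by all words $\sigma$, which is precisely why the factor $\sup_y \N_n(y)$ (the total $p_\sigma$-weight of words that produce a common $y$) must appear. Converting the open-ball form of the wsc to the closed-ball form is a minor technicality that costs only an additive constant and therefore washes out in the limit.
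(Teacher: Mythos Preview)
Your proposal is correct and follows essentially the same route as the paper's proof: bound $\mu([x-\rho^n,x+\rho^n])$ by $\sum_{\sigma\in H_n}p_\sigma$, group the words by the common value $y=S_\sigma(0)$, bound each group by $\mathcal{N}_n(y)\leq\sup_y\mathcal{N}_n(y)$, and bound the number of groups using the asymptotic wsc. You are in fact slightly more careful than the paper on two points it glosses over---the open-versus-closed interval discrepancy in the definition of asymptotic wsc (your $f(n)+2$), and the passage from $\epsilon$ back to $\rho^n$ (your factor $n+1$)---neither of which affects the limit.
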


\begin{proof}
Suppose $f(n)$ is a sequence with $(\log f(n))/n\rightarrow 0$ and
satisfying (\ref{awsc}).

It is convenient to put 
\begin{equation*}
H_{n}=\{\sigma \in \mathcal{A}^{n} : S_{\sigma }[0,1]\cap \lbrack x-\rho
^{n},x+\rho ^{n}]\neq \emptyset \}.
\end{equation*}
With this notation, we have $\mu \lbrack x-\rho ^{n},x+\rho ^{n}]\leq
\sum_{\sigma \in H_{n}}p_{\sigma }$ for all $n$. Let 
\begin{equation*}
\mathcal{H}{_{n}}=\{S_{\sigma }(0):\sigma \in H_{n}\}.
\end{equation*}
Note that $\mathcal{H}{_{n}}$ may contain fewer elements than $H_{n}$ as we
may have $S_{\sigma }(0)=S_{\tau }(0)$ for $\sigma ,\tau \in H_{n}$. Indeed,
the asymptotically wsc guarantees $\#\mathcal{H}_{n}\leq f(n)$. Since 
$\sum_{\sigma \in \mathcal{A}^{n}:S_{\sigma }(0)=y}p_{\sigma }\leq \mathcal{N}_{n}(y),$ we see that 
\begin{equation*}
\mu \lbrack x-\rho ^{n},x+\rho ^{n}]\leq \sum_{y\in \mathcal{H}{_{n}}}
\mathcal{N}_{n}(y)\leq \#\mathcal{H}{_{n}}\sup_{y}\mathcal{N}_{n}(y)\leq
f(n) \sup_{y}\mathcal{N}_{n}(y).
\end{equation*}
Thus 
\begin{align*}
\llocdim \mu (x)& =\liminf_{n\rightarrow \infty }
\frac{\log (\mu \lbrack x-\rho ^{n},x+\rho ^{n}])}{\log (2\rho ^{n})} \\
& \geq \liminf_{n\rightarrow \infty }\frac{\log (\sup_{y}\mathcal{N}
_{n}(y))+\log (f(n))}{n\log \rho + \log 2 } \\
& =\liminf_{n\rightarrow \infty }\frac{\log \sup_{y}(\mathcal{N}_{n}(y))}
{n\log \rho }.
\end{align*}
\end{proof}

Note that $\sup_{y\in \lbrack 0,1]}\mathcal{N}_{n+m}(y)\geq \sup_{y\in
\lbrack 0,1]}\mathcal{N}_{n}(y)\times \sup_{y\in \lbrack 0,1]}\mathcal{N}
_{m}(y)$. Thus good bounds on $\sup_{y\in \lbrack 0,1]}\mathcal{N}_{n}(y)$
will result in good lower bounds for $\llocdim \mu
(x)$.

We will consider the same example as in the previous subsection. Let $\rho
=0.8$ and $n=4$. We will again assume that $p_{0}=p_{1}=1/2$, the unbiased
case. When considering upper bounds, we wished to use some $I\subsetneq
\lbrack 0,1]$. In this case we will use $I=[0,1]$. There are 16 images of 
$S_{\sigma }([0,1])$ for $|\sigma |=4$. These are listed in Table 
\ref{tab:lower} in increasing order of the left endpoint.

\begin{table}[tbp]
\begin{tabular}{l|l}
$\sigma$ & $S_\sigma([0, 1]$ \\ \hline
{}0000 & [.0000, .4096] \\ 
{}0001 & [.1024, .5120] \\ 
{}0010 & [.1280, .5376] \\ 
{}0100 & [.1600, .5696] \\ 
{}1000 & [.2000, .6096] \\ 
{}0011 & [.2304, .6400] \\ 
{}0101 & [.2624, .6720] \\ 
{}0110 & [.2880, .6976] \\ 
{}1001 & [.3024, .7120] \\ 
{}1010 & [.3280, .7376] \\ 
{}1100 & [.3600, .7696] \\ 
{}0111 & [.3904, .8000] \\ 
{}1011 & [.4304, .8400] \\ 
{}1101 & [.4624, .8720] \\ 
{}1110 & [.4880, .8976] \\ 
{}1111 & [.5904, 1.000]
\end{tabular}
\caption{Images of $S_\protect\sigma([0, 1])$ for $|\protect\sigma|=4$}
\label{tab:lower}
\end{table}

It is easy to compute that $\sup_{y\in \lbrack 0,1]}\mathcal{N}_{4}(y)=
\mathcal{N}_{4}(0.5)=14/2^{4}$. From this we conclude that $\llocdim
\mu (x)\geq \frac{\log 14/2^{4}}{4\log (0.8)} \sim 0.5984102692$. 
Because we are using $[0,1]$ exactly, we need to worry about situations
where $S_{\sigma }(0)=S_{\tau }(1)$ for some $|\sigma |=|\tau |$. Such cases
are known as \emph{transition points}. In these cases the value of $\sup_{y}
\mathcal{N}_{n}(y)$ may change as $\rho $ is increased or decreased
slightly. Such transition points occur when $\rho $ satisfies very precise
algebraic conditions and are easily enumerated for each $n$. A discussion of
how to find and properly compute these transition points for a fixed $n$ is
discussed in \cite{HareSidorov1, HareSidorov2}.

In Figure \ref{fig:lower} we indicate the lower bounds using $n$ up to 10.
The points on this graph indicate the lower bounds at transition points,
whereas the lines indicate regions between transition points when 
$\sup_{y\in \lbrack 0,1]}\mathcal{N}_{n}(y)$ is constant. We have computed
the lower bounds for $\sup_{y\in \lbrack 0,1]}\mathcal{N}_{n}(y)$ at all
transition points. 
The following theorem illustrates the kind of information this approach will
yield.

\begin{thm}
\label{thm:lower} Suppose the unbiased Bernoulli convolution $\mu _{\rho }$
satisfies the asymptotically weak separation condition. Then for all $x\in
\lbrack 0,1]$ we have lower bounds as described in Table \ref{tab:lower}.
More precise information can be found in Figure \ref{fig:lower}.

\begin{table}[tbp]
\begin{tabular}{l|l}
Range of $\rho$ & Lower bound for $\llocdim \mu(x)$
\\ \hline
$[0.50, 0.55]$ & $0.792021$ \\ 
$[0.55, 0.60]$ & $0.825663$ \\ 
$[0.60, 0.65]$ & $0.840348$ \\ 
$[0.65, 0.70]$ & $0.824701$ \\ 
$[0.70, 0.75]$ & $0.750984$ \\ 
$[0.75, 0.80]$ & $0.635012$ \\ 
$[0.80, 0.851]$ & $0.416226$
\end{tabular}
\caption{Lower bound for local dimensions}
\label{tab:lower}
\end{table}
\end{thm}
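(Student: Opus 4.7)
The plan is to apply the lemma established immediately before the theorem, which under the asymptotically wsc gives
\begin{equation*}
\llocdim \mu_\rho(x) \geq \liminf_{N\to\infty} \frac{\log(\sup_y \mathcal{N}_N(y))}{N\log\rho}
\end{equation*}
for every $x \in [0,1]$. Combined with the submultiplicativity observation
\begin{equation*}
\sup_y \mathcal{N}_{n+m}(y) \geq \sup_y \mathcal{N}_n(y) \cdot \sup_y \mathcal{N}_m(y),
\end{equation*}
this implies that for any fixed $n$,
\begin{equation*}
\liminf_{N\to\infty} \frac{\log(\sup_y \mathcal{N}_N(y))}{N\log\rho} \geq \frac{\log(\sup_y \mathcal{N}_n(y))}{n\log\rho},
\end{equation*}
so each finite $n$ yields a valid lower bound and we may take the best one over $n \leq 10$.

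The computation then reduces to determining $\sup_{y\in[0,1]} \mathcal{N}_n(y)$ as a function of $\rho$ for $n = 1, 2, \dots, 10$. Proceeding as in the worked example with $\rho = 0.8$ and $n = 4$, one enumerates the $2^n$ intervals $S_\sigma([0,1])$ for $|\sigma| = n$, orders their endpoints, and for each subinterval of $[0,1]$ determined by these endpoints counts the number of $\sigma$ whose image covers it, which yields $\mathcal{N}_n(y) \cdot 2^n$ there. The supremum over $y$ is the maximum multiplicity.

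The principal obstacle is that the collection of intervals $\{S_\sigma([0,1]) : |\sigma|=n\}$ and hence $\sup_y \mathcal{N}_n(y)$ depends on $\rho$. As $\rho$ varies continuously, the relative order of the endpoints $S_\sigma(0)$ and $S_\tau(1)$ may change, and the maximum multiplicity is piecewise constant in $\rho$ with jumps precisely at the transition points where $S_\sigma(0) = S_\tau(1)$ for some $|\sigma| = |\tau| = n$. These transition points are the roots of explicit polynomial equations in $\rho$ with coefficients in $\{-1,0,1\}$, and for each fixed $n$ they can be enumerated exactly using the methods of \cite{HareSidorov1, HareSidorov2}. Between consecutive transition points the value of $\sup_y \mathcal{N}_n(y)$ is constant, so the bound it produces is a monotone function of $\rho$ (in fact of $1/\log\rho$) on each such subinterval.

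Finally, for each range of $\rho$ listed in Table \ref{tab:lower}, one runs the above enumeration for every $n \leq 10$ over every sub-subinterval defined by the transition points falling inside, evaluates the resulting bound at the worst-case endpoint of that sub-subinterval, and records the maximum over $n$. The tabulated numerical values are then simply the minimum of these per-range maxima over the sub-subintervals of each range, and the more detailed piecewise-linear picture in Figure \ref{fig:lower} records the same data before minimizing over sub-ranges.
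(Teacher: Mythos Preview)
Your approach is exactly the paper's: invoke the preceding lemma, use the multiplicativity property of $b_n:=\sup_y \mathcal{N}_n(y)$ to replace the $\liminf$ by a finite-$n$ quantity, then compute $b_n$ for $n\le 10$ by enumerating the $2^n$ intervals $S_\sigma([0,1])$ and tracking transition points in $\rho$ via \cite{HareSidorov1,HareSidorov2}.

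One correctness caveat worth fixing: the displayed inequality should read
\[
\sup_y \mathcal{N}_{n+m}(y)\ \le\ \sup_y \mathcal{N}_n(y)\cdot \sup_y \mathcal{N}_m(y),
\]
i.e.\ genuine submultiplicativity (as you named it), which follows from $\mathcal{N}_{n+m}(y)=\sum_{\sigma\in\mathcal{E}_n(y)}p_\sigma\,\mathcal{N}_m(S_\sigma^{-1}(y))\le b_m\,\mathcal{N}_n(y)$. It is this direction that, together with $\log\rho<0$ and Fekete's lemma applied to $-\log b_n$, gives $\liminf_N \tfrac{\log b_N}{N\log\rho}\ge \tfrac{\log b_n}{n\log\rho}$; with the $\ge$ sign as you wrote it the implication would point the wrong way. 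The paper contains the same slip, and your stated conclusion is the correct one.
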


It is worth noting that Theorem \ref{thm:lower} applies only to those $\rho $
for which the measure satisfies the asymptotically wsc, whereas this is not
clearly indicated in Figure \ref{fig:lower}. Figure \ref{fig:lower} should
be interpreted as: If $\mu _{\rho }$ satisfies the asymtotically wsc, then
the value on the graph is a lower bound
of the set $\{\locdim \mu(x) : x \in [0,1]\}$. 
If, instead, $\mu _{\rho }$ does
not satisfy the asymptotically wsc, then the corresponding value on the
graph has no meaning. 

The smallest known Salem number is approximately $1.176280$, the root of 
$x^{10}+x^{9}-x^{7}-x^{6}-x^{5}-x^{4}-x^{3}+x+1$. This has an approximate
reciprocal of $0.850137$. The smallest Pisot number is approximately 
$1.324718,$ the root of $x^{3}-x-1$, with approximate reciprocal of $0.754877$.
As the only known Bernoulli convolutions satisfying the asymptotically wsc
are those where $\rho $ is the reciprocal of a Pisot or Salem number, Table 
\ref{tab:lower} was restricted to $\rho \leq 0.851$.

\begin{figure}[tbp]
\includegraphics[width=300pt,height=400pt,angle=270]{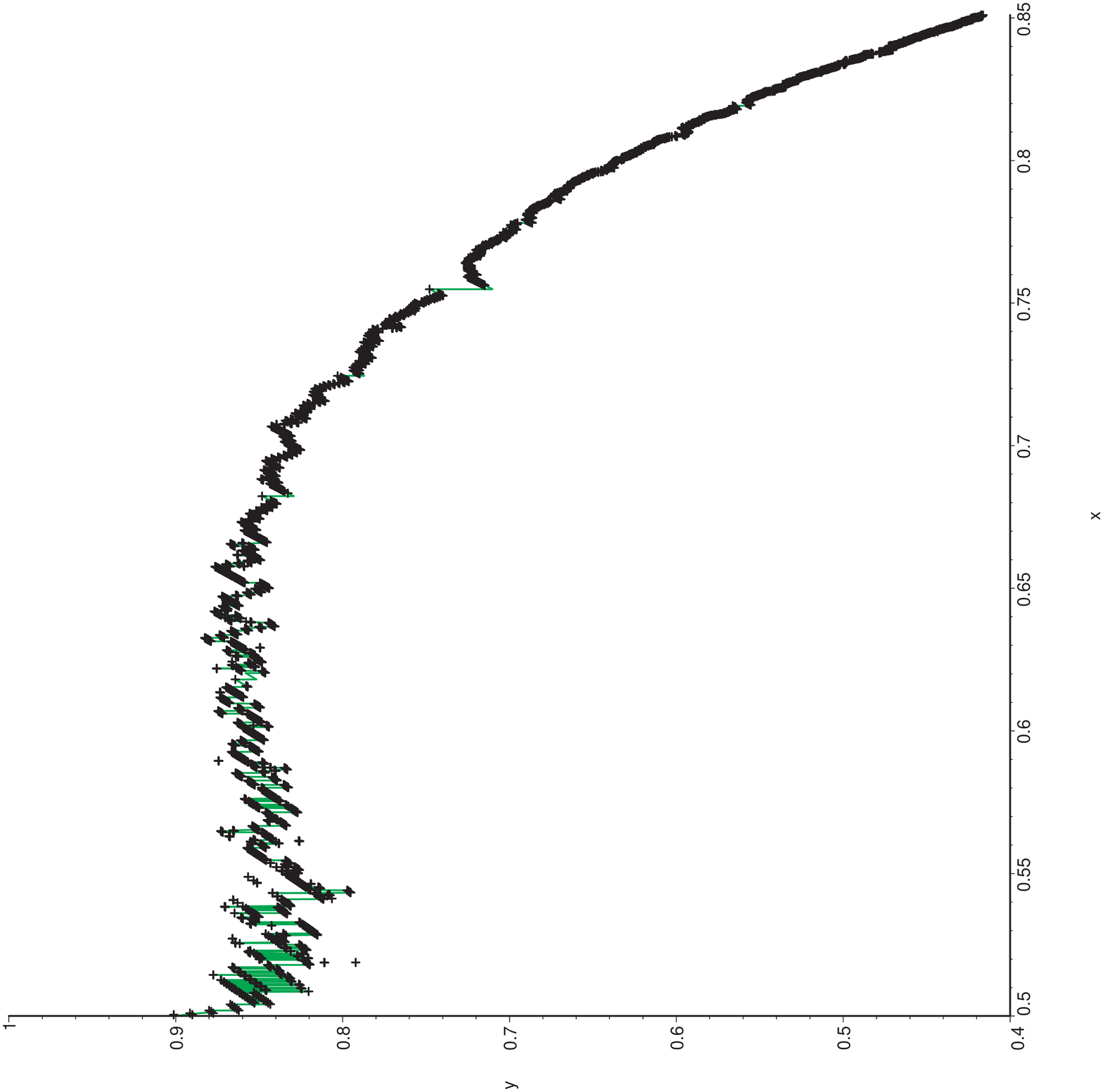}
\caption{Lower Bounds for local dimensions of unbiased Bernoulli
convolutions }
\label{fig:lower}
\end{figure}

\end{document}